\newtheorem{theorem}{Theorem}[section]
\newtheorem{lemma}[theorem]{Lemma}
\newtheorem{varthmC}{Conjecture}
\newtheorem{varthm1}{Main Theorem}
\newtheorem*{Ko}{Korselt's Criterion}
\newtheorem*{Mo}{Motivating Question}
\begin{document}
\title{Variants of Korselt's Criterion}
\author[T. Wright]{Thomas Wright}
\maketitle

\begin{abstract}
Under sufficiently strong assumptions about the first term in an arithmetic progression, we prove that for any integer $a$, there are infinitely many $n\in \mathbb N$ such that for each prime factor $p|n$, we have $p-a|n-a$.  This can be seen as a generalization of Carmichael numbers, which are integers $n$ such that $p-1|n-1$ for every $p|n$.
\end{abstract}

\section{Introduction}
Recall that a Carmichael number is a composite number $n$ for which

$$a^n\equiv a\pmod n$$
for every $a\in \mathbb Z$.  As Fermat's Little Theorem states that the above congruence is true whenever $n$ is prime, Carmichael numbers thus serve as the disproof of the converse of Fermat's Little Theorem.


While the first Carmichael numbers were discovered in 1910 by R.D. Carmichael \cite{Ca}, the search for Carmichael numbers was aided by the discovery of a necessary and sufficient condition by A. Korselt \cite{Ko} in 1899:

\begin{Ko} \textit{$n$ is a Carmichael number if and only if $n$ is squarefree and $p-1|n-1$ for each prime $p|n$.}\end{Ko}

Much like the relationship between Carmichael numbers and Fermat's Little Theorem above, Korselt's criterion is also seen to be easily satisfied if $n$ is prime.

Korselt's criterion is of fundamental importance in the study of Carmichael numbers, as it remains the primary tool used to prove statements about such numbers.  Because of its importance in the study of such pseudoprimes, mathematicians have begun to ask questions about what might happen should one wish to generalize Korselt's criterion.  The most obvious of these generalizations is to change the 1 to another number; it is this generalization which motivates the present paper:

\begin{Mo} For any $a\neq 1$, are there infinitely many composite $n$ such that for every prime $p|n$, $p-a|n-a$?
\end{Mo}

The question is not new.  In their 1994 proof the infinitude of Carmichael numbers, Alford, Granville, and Pomerance \cite{AGP} stated the following:\\

\textit{One can modify our proof to show that for any fixed nonzero integer $a$, there are infinitely many squarefree, composite integers $n$ such that $p-a$ divides $n-1$ for all primes $p$ dividing $n$.  However, we have been unable to prove this for $p-a$ dividing $n-b$, for $b$ other than 0 or 1.  Such questions have significance for variants of pseudoprime tests, such as the Lucas probable prime test, strong Fibonacci pseudoprimes, and elliptic pseudoprimes.}\\

Our question, then, is the specific case of $a=b$.  For purposes of nomenclature, we will refer to a number $n$ for which $p|n$ implies $p-a|n-a$ as an \textit{$a$-Carmichael number}.  A regular Carmichael number is then a 1-Carmichael number.



There has been little progress on this problem since 1994.  In fact, there was no progress at all until 2011, when Ekstrom, Pomerance, and Thakur \cite{EPT} gave a conditional proof of infinitude in the case of $a=b=-1$; this result was later proven unconditionally in a 2013 paper by the present author \cite{WrE}.  As of now, however, -1 remains the only value for $a$ (besides 1 and 0) where anything has been proven, even conditionally.

In this paper, we use a conjecture of Heath-Brown to resolve the case of $p-a|n-a$ for every $a\in\mathbb Z$.  As this divisibility condition is easily satisfied when $n$ is prime, our theorem can be seen as a reasonable generalization in the search for pseudoprimes.  It is not clear that our results can be pushed to cases of $a\neq b$; this is in part because it is not clear for which cases of $a$ and $b$ one expects infinitely many $n$.  In the next section, we will discuss the conjecture of Heath-Brown and our results.





\section{Conjectures about primes in arithmetic progressions}

The results of this paper will hinge upon the size of the first prime in an arithmetic progression.  The standing conjecture in the area was made by Roger Heath-Brown \cite{HB}, who claimed the following:

\begin{varthmC} Let $(c,m)=1$.  Then the smallest prime $p$ that is congruent to $c$ mod $m$ is $\ll m(\log m)^2$.
\end{varthmC}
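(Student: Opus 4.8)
The plan is to attack this through the analytic theory of Dirichlet $L$-functions, since the statement is a quantitative sharpening of Dirichlet's theorem on primes in arithmetic progressions. Writing $\psi(x;m,c)=\sum_{n\le x,\ n\equiv c\,(m)}\Lambda(n)$, orthogonality of characters together with the explicit formula gives
$$\psi(x;m,c)=\frac{x}{\phi(m)}-\frac{1}{\phi(m)}\sum_{\chi\bmod m}\bar\chi(c)\sum_{\rho}\frac{x^{\rho}}{\rho}+O\!\left(\frac{\log^2(mx)}{\phi(m)}\right),$$
where $\rho=\beta+i\gamma$ runs over the nontrivial zeros of $L(s,\chi)$. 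To exhibit a prime $\le x$ in the progression it suffices to show $\psi(x;m,c)>0$, i.e.\ that the main term $x/\phi(m)$ dominates the contribution of the zeros. Everything thus reduces to controlling how close the zeros approach the line $\mathrm{Re}\,s=1$ and how densely they cluster there.

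First I would invoke the three classical pillars of Linnik's method: a Page--type zero-free region for $\prod_{\chi}L(s,\chi)$, with at most one exceptional real zero $\beta_1$ arising from a real character; a log-free zero-density estimate bounding $N(\sigma,T,\chi)$; and the Deuring--Heilbronn phenomenon, which repels all other zeros should a Siegel zero $\beta_1$ exist. Feeding these into the explicit formula yields Linnik's theorem, namely that the least prime is $\ll m^{L}$ for an absolute constant $L$ (currently $L\le 5$ after Xylouris's refinement of Heath-Brown's work). The entire difficulty of the conjecture is then compressed into replacing this power of $m$ by $m(\log m)^2$.

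To gauge how far the target lies, I would next assume the Generalized Riemann Hypothesis, so that every $\beta=\tfrac12$ and no exceptional zero occurs. Bounding $\sum_\rho x^{\rho}/\rho \ll \sqrt{x}\,\log^2(mx)$ for each character and summing over the $\phi(m)$ characters makes the error term $\ll \sqrt{x}\,\log^2(mx)$; requiring this to be dominated by $x/\phi(m)$ forces only $x\gg \phi(m)^2\log^4(mx)$, which in the worst case is of size $m^2\log^4 m$. Thus even GRH delivers a bound involving the \emph{square} of $m$, still a full power of $m$ short of the conjecture.

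The hard part — and the reason this remains a conjecture rather than a theorem — is that closing the final gap demands information on the vertical distribution of the zeros that lies strictly beyond GRH. One would need, in effect, square-root-with-saving cancellation in the sum over zeros together with strong simultaneous control of the low-lying zeros of all $L(s,\chi)\bmod m$, of the kind predicted by Montgomery's pair-correlation conjecture or by a Cramér-style probabilistic model for primes in progressions; indeed the shape $m(\log m)^2$ is itself the output of such a heuristic. Since no estimates of this strength are available with the required uniformity in $m$, I would not expect to prove the conjecture outright, and the honest deliverable is a \emph{conditional} statement: the bound follows from these deeper equidistribution hypotheses and is consistent with all known unconditional bounds. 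It is precisely this conjecture that the present paper takes as a hypothesis in order to construct $a$-Carmichael numbers.
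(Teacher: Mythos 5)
This statement is Heath-Brown's conjecture, which the paper never proves: it is stated as an open hypothesis, with the paper itself noting that the best unconditional bound is Xylouris's $\ll m^{5.2}$ and that even GRH would yield only $\ll m^{2+\epsilon}$. Your proposal correctly declines to claim a proof, and your supporting analysis (Linnik's method and its refinements giving a fixed power of $m$, GRH still falling a full power of $m$ short, the bound $m(\log m)^2$ arising only from heuristic models) matches the paper's own framing of the statement as an assumption lying beyond current techniques.
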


Other versions of this conjecture have previously been used to prove results about Carmichael numbers and associated constructs.  For instance, Banks and Pomerance \cite{BP} used a variant of this conjecture to prove infinitely many Carmichael numbers in arithmetic progressions, and Ekstrom, Pomerance, and Thakur \cite{EPT} used another form of this conjecture to prove the aforementioned result about numbers $n$ for which $p|n$ implies $p+1|n+1$.  In both cases, the authors actually used slightly weaker variants of the Heath-Brown conjecture; in fact, both papers showed that their results can be proven using either $m^{1+(\log m)^{\kappa-1}}$ for some $\kappa<1$ or $m^{1+\frac{\eta}{\log\log m}}$ for some $\eta>0$ as the upper bound.  While the results in \cite{BP} and \cite{EPT} have been majorized in subsequent papers by Matom\"{a}ki and the present author (see \cite{Ma}, \cite{WrC}, and \cite{WrE}), the idea for using Heath-Brown's conjecture to prove results about Carmichael numbers can be seen to come from these works.

In this paper, we stick primarily with the original Heath-Brown version of the conjecture, as it is not clear that our results would still hold with the variants mentioned above.  We do afford ourselves the following relaxation of the conjecture:


\begin{varthmC} \label{strong} Let $(c,m)=1$.  Then there exists some constant $A$ such that smallest prime $p$ that is congruent to $c$ mod $m$ is $\ll m(\log m)^A$.
\end{varthmC}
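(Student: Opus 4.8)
The plan is to obtain Conjecture \ref{strong} directly as a weakening of the Heath-Brown conjecture stated immediately above, which I am free to invoke. That conjecture asserts that for $(c,m)=1$ the smallest prime $p\equiv c\pmod m$ satisfies $p\ll m(\log m)^2$. Since Conjecture \ref{strong} only asks for the existence of \emph{some} constant $A$ with $p\ll m(\log m)^A$, it suffices to take $A=2$: the required bound is then literally the Heath-Brown bound, and no further argument is needed. In this sense the final statement is a formal consequence of, and strictly weaker than, the conjecture preceding it. It is recorded separately only to pin down the exact hypothesis under which the paper's construction of $a$-Carmichael numbers will be carried out, leaving room for any fixed power of the logarithm rather than the specific exponent $2$.

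Because the statement is itself conjectural, the substantive question is what an \emph{unconditional} proof would require, and here I would follow the classical route to the least prime in an arithmetic progression due to Linnik. The plan would be to pass through the explicit formula expressing $\psi(x;m,c)$ as a sum over the nontrivial zeros of the Dirichlet $L$-functions $L(s,\chi)$ with $\chi\pmod m$, and then to control the zero contributions using three ingredients: a classical zero-free region, a log-free zero-density estimate of the shape $\sum_{\chi}N(\sigma,T,\chi)\ll (mT)^{c(1-\sigma)}$, and the Deuring--Heilbronn repulsion phenomenon to neutralize a possible exceptional Siegel zero. Assembling these yields Linnik's theorem, $p\ll m^{L}$ for an absolute constant $L$ (currently known to be at most roughly $5$), which is the best bound available with no unproven hypothesis.

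The main obstacle is precisely the gap between what this machinery delivers and the bound $m(\log m)^A$ demanded here. Even under the Generalized Riemann Hypothesis one obtains only $p\ll m^2(\log m)^2$, still polynomially larger than the conjectured size, since Heath-Brown's bound is essentially the square root of the GRH bound and reflects a probabilistic heuristic rather than anything the analytic theory currently reaches. Bridging this would require control of the zeros of $L(s,\chi)$ far beyond the density hypothesis, effectively a near-optimal understanding of cancellation among primes in the progression, which is well outside present methods. I therefore do not expect an unconditional proof; the proper reading of Conjecture \ref{strong} is as an assumption, namely the weakest least-prime hypothesis compatible with the subsequent argument, justified heuristically and implied by the standard Heath-Brown conjecture.
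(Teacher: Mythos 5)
Your treatment matches the paper's exactly: Conjecture \ref{strong} is not proven there but stated as an assumption, explicitly introduced as a relaxation of Heath-Brown's conjecture (so it follows formally by taking $A=2$), and your surrounding remarks on what is actually known --- Linnik-type bounds, Xylouris's $\ll m^{5.2}$, and $\ll m^{2+\epsilon}$ under GRH --- agree with the paper's own discussion of why an unconditional proof is far out of reach. Nothing is missing; this is the correct reading of the statement as a hypothesis rather than a theorem.
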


Our results would still hold if one assumed that the first prime in an arithmetic progression were $\ll m(\log m)^{\log \log m}$; however, we use the conjecture above for the purposes of transparency.


It should be noted that all of these conjectures about first primes in an arithmetic progression appear to be well beyond scope of modern mathematics.  Currently, the best bound for first prime in an arithmetic progression mod $c$ is $\ll c^{5.2}$, which was proven by Xylouris \cite{Xy} in 2009.  Even the assumption of the Generalized Riemann Hypothesis would only reduce this bound to $\ll c^{2+\epsilon}$.


Regardless, if we are afforded the conjectures above, we are able to resolve the problem of $a$-Carmichael numbers completely.


\begin{varthm1}
Assume Conjecture \ref{strong}.  Then for any $a$, there are infinitely many positive integers $n$ such that $p-a|n-a$ for each prime $p|n$.  In fact, there exists a constant $C$ such that the number of $a$-Carmichael numbers up to $X$ is
$$\gg X^{\frac{C}{(\log\log\log X)^2}}.$$
\end{varthm1}


The proof of the main theorem is a combination of the ideas of \cite{AGP}, \cite{EPT}, and \cite{WrF}, as well as some new ideas.  As in \cite{AGP}, we begin with an integer $L$ for which the maximum order of an element mod $L$ is small relative to $L$.  Using this, we use the conjecture to find primes of the form $dk+a$ with relatively small $k$ for many of the $d|L$.  From here, we establish that there exists a relatively small $k$ for which many of the $dk+a$ are prime simultaneously; then, we invoke the conjecture again to find that there exists a relatively small prime $P$ that is congruent to $a$ mod $kL$.  With this setup, we prove that there exists some collection of these primes $dk+a$ whose product is congruent to 1 mod $P-a$; multiplying this product by $P$ then gives us an $a$-Carmichael number.  Since there are infinitely many such $L$, there are infinitely many $a$-Carmichael numbers.

It is worth noting that there are several new ideas in this paper:\\

- In contrast to \cite{BP} and \cite{EPT}, we use the Heath-Brown Conjecture not once but twice; we first use it to find a suitable set of primes with which to work, and then we use it again to find another prime to append to this set in order to create an $a$-Carmichael number.

- Additionally, instead of simply taking $d$ to be any divisor of $L$, we first group the prime divisors of $L$ into sets of size $A+1$; we then take the $d$'s to be only those divisors of $L$ that can be written as products of some of these sets of $A+1$ primes.  Doing this ensures that we do not double-count any of the primes of the form $dk+a$.

- Most importantly, for our various choices of $d$, we do not attempt to find a density of primes of the form $dk+a$.  Instead, we only look to find a single prime of this form for each $d$.  This is the key to our approach because it dramatically lessens our value for $k$; Heath-Brown's conjecture can be applied for $k$ as small as $\log^A d$, while density estimates can only be used if $k$ is at least $d^\epsilon$ (even under Montgomery's conjecture for primes in arithmetic progressions [MV, Conjecture 13.9]).  Because $k$ is small, the order of $a$ mod $k$ will be small as well.






\section{Finding Primes: Conjecture Application \#1}
As in other papers on Carmichael numbers, we begin by letting $\mathcal Q$ be a collection of primes $q$ for which $q-1$ is relatively smooth.  More specifically, let $P(n)$ denote the largest prime factor of $n$, and let $1<\theta<2$.  Then we define

\[\mathcal Q=\{q\mbox{ }prime:\frac{y^\theta}{\log y}\leq q\leq y^{\theta},\mbox{ }q\equiv -1 \pmod{\alpha},\mbox{ }P(q-1)\leq y\}.\]

It is known that there exist $Y$, $\gamma$ such that if $y>Y$ then $|\mathcal Q|\geq \gamma \frac{y^\theta}{\log y^\theta}$ (see [Wr2, Lemma 2.1]).  From this, we construct $L$ to be the following:

$$L=\prod_{q\in \mathcal Q}q.$$

The usual procedure in these cases is to examine the number of primes of the form $dk+a$ for the various $d|L$.  From here, we then show that there are enough primes of this form to guarantee that there exists some $k$ for which there are many primes of this form.  Here, though, we make an alteration to ensure that we are not counting the same primes multiple times.  To this end, we define the following:

Let us index the divisors of $L$ as $q_1,q_2,....,q_{\omega(L)}$.  We will assume that Conjecture 2 is true for $\log^A d$ for some power $A$.  We then define

$$Q_i=q_{\scriptscriptstyle{(A+1)(i-1)+1}}q_{\scriptscriptstyle{(A+1)(i-1)+2}}...q_{\scriptscriptstyle{(A+1)i}}.$$

Note that for any such $Q_i$, we have

$$Q_i> \frac{y^{(A+1)\theta}}{\log^{A+1} y}.$$

We will only then consider divisors $d$ of $L$ such that $d$ can be written as the product of $Q$'s.  For a given $k$, let

$$\mathcal P_k=\{p=dk+a:p\mbox{ }prime,d|L,d=\prod_{j\in S}Q_j\mbox{ }for\mbox{ }some\mbox{ }S\subset \{1,2,...,\left[\frac{\omega(L)}{A+1}\right]\}\}.$$

\section{Goals and Bounds}
Our goal will be to show that there exists a $\mathcal P_k$ that is sufficiently large for our purposes.  Here, sufficiently large will be defined in terms of the following theorem of van Emde Boas and Kruyswijk \cite{EK} and Meshulam \cite{Me}:

\begin{theorem}\label{main}
Let $n(L)$ denote the smallest number such that any set of at least $n(L)$ elements of $(\mathbb Z/L\mathbb Z)^\times$ must contain some subset whose product of its elements is 1 mod $L$.  Let $\lambda(L)$ denote the maximal order (with regard to multiplication) of an element mod $L$.  Then
\begin{gather}
n(L)<\lambda(L)\left(1+\log\left(\frac{L}{\lambda(L)}\right)\right).
\end{gather}

Moreover, let $r>t>n(L)$.  Then any set of $r$ elements of $(\mathbb Z/L\mathbb Z)^\times$ contains at least $\left(\begin{array}{c} r\\t
\end{array}\right)/\left(\begin{array}{c} r\\ n(L) \end{array}\right)$ distinct subsets of size at most $t$ and at least $t-n$ whose product is 1 mod $L$.
\end{theorem}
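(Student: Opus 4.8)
The plan is to prove this in two parts: first the upper bound on $n(L)$, then the counting statement about subsets whose product is $1$. For the first part, I would proceed by a greedy/pigeonhole argument on partial products. Let $g_1,\dots,g_s$ be the given elements of $(\mathbb Z/L\mathbb Z)^\times$. Consider the partial products $\pi_j = g_1g_2\cdots g_j$. If two of these are equal, say $\pi_i = \pi_j$ with $i<j$, then $g_{i+1}\cdots g_j = 1$ and we are done. More refined: since every element has order dividing $\lambda(L)$, I would build up subproducts and track how many distinct values can be attained. The cleanest route is to show that if $s \geq \lambda(L)(1+\log(L/\lambda(L)))$ then some nonempty subproduct equals $1$, via the observation that the number of distinct subset-products generated grows until it must close up into a subgroup. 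I would estimate the growth of the set of attainable products: adding each new element either creates a coset collision (yielding a trivial subproduct) or at least multiplies the reachable set by a controlled factor, and since the ambient group has order $L$ while each element's order is at most $\lambda(L)$, the logarithmic factor $\log(L/\lambda(L))$ emerges from counting how many doublings (or $\lambda$-fold expansions) are possible before exhausting the group.

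For the second, stronger counting statement, I would argue as follows. Given $r$ elements with $r>t>n(L)$, I want to produce many distinct zero-sum (product-one) subsets of size between $t-n$ and $t$. The idea is to consider all $\binom{r}{t}$ subsets of size exactly $t$. Since $t > n(L)$, each such $t$-subset, being larger than the threshold $n(L)$, must contain a nonempty subproduct equal to $1$; removing that subproduct leaves a product-one structure. The trick for counting distinct solutions is an averaging/double-counting argument: I would count pairs (large set, product-one subset contained in it). Each product-one subset $T$ of size at most $t$ is contained in at most $\binom{r}{n(L)}$ of the $t$-subsets (crudely bounding the number of ways to extend $T$), and there are $\binom{r}{t}$ source subsets each contributing at least one such $T$. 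Dividing gives the claimed lower bound $\binom{r}{t}/\binom{r}{n(L)}$. The size constraint (between $t-n$ and $t$) would come from controlling the size of the extracted product-one subset, which by the first part can be taken to lie in the appropriate range.

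I expect the main obstacle to be the first part, specifically making the growth estimate that produces the precise constant $\lambda(L)(1+\log(L/\lambda(L)))$ rather than something weaker. The delicate point is quantifying the expansion of the set of reachable subproducts at each step: one must show that each new element, if it does not immediately close a loop, enlarges the reachable set substantially, and that after exhausting $\lambda(L)$ "independent" directions the remaining elements can only be absorbed logarithmically. This is essentially the Davenport-constant-style analysis specialized to $(\mathbb Z/L\mathbb Z)^\times$, and getting the logarithm (as opposed to a polynomial factor) requires the observation that the structure is a product of cyclic groups each of order dividing $\lambda(L)$, so the relevant bound is governed by the rank and the exponent together. Since both inequalities are attributed to van Emde Boas--Kruyswijk and Meshulam, I would most likely cite their arguments for the sharp constants and present only the skeleton above, taking care that the second statement's counting follows formally once the threshold $n(L)$ is in hand.
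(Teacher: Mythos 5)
The paper never actually proves this theorem: it states it as a result of van Emde Boas--Kruyswijk and Meshulam and refers the reader to \cite{AGP}, Proposition 1.2, for the proof. So your closing decision to cite the literature for the sharp constant is consistent with what the paper does, and your admission that the partial-product/expansion heuristic for the first inequality is not a complete argument is fair (pigeonhole on partial products only yields a bound of order $|(\mathbb Z/L\mathbb Z)^\times|$; the logarithmic bound genuinely requires the van Emde Boas--Kruyswijk induction or Meshulam's uncertainty inequality, neither of which your ``doubling'' sketch reproduces).

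However, there is a genuine gap in the one part you claim ``follows formally once the threshold $n(L)$ is in hand'': the counting statement. Your extraction step is wrong as stated. If a $t$-subset $T$ contains a nonempty product-one subset $S$, removing $S$ does not leave a product-one set, and, more importantly, a single application of the definition of $n(L)$ gives no lower bound whatsoever on $|S|$; it could have two elements. The double count then collapses: a product-one subset of size $s$ is contained in $\binom{r-s}{t-s}$ of the $t$-subsets, and this is comparable to $\binom{r}{n(L)}$ only when $t-s\le n(L)$, so without a size lower bound the claimed division by $\binom{r}{n(L)}$ is unjustified. The missing idea, which is how \cite{AGP} argues, is to extract repeatedly: inside each $t$-subset $T$ take a maximal collection of pairwise disjoint nonempty product-one subsets $S_1,\dots,S_k$ and set $S=S_1\cup\cdots\cup S_k$. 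The product over $S$ is still $1$, and by maximality $T\setminus S$ contains no nonempty product-one subset, so $|T\setminus S|<n(L)$ by the very definition of $n(L)$, giving $|S|>t-n(L)$. With that size control, each admissible $S$ lies in $\binom{r-|S|}{t-|S|}\le\binom{r}{n(L)}$ of the $\binom{r}{t}$ sets of size $t$ (since $t-|S|<n(L)$, in the range of parameters where these binomial coefficients are increasing), and the bound $\binom{r}{t}/\binom{r}{n(L)}$ on the number of distinct product-one subsets of size between $t-n(L)$ and $t$ follows. You should either include this union-of-disjoint-extractions argument or extend your citation to cover the counting statement as well, rather than asserting it is formal.
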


A comprehensive proof of this theorem is given in [AGP, Proposition 1.2]; rather than reprint the proof here, we refer the interested reader to that paper.

Obviously, this theorem indicates that it will be important for us to learn about the size of $\lambda(L)$.  To this end, we note that $L$ consists of primes $q$ for which $q-1$ only has prime factors that are less than $y$.  For a prime $r$, let $a_r$ be the largest power such that $r^{a_r}\leq y^\theta$.  Then

$$\lambda(L)\leq \prod_{r\leq y,r\mbox{ }prime}r^{a_r}\leq \prod_{r\leq y,r\mbox{ }prime}y^{\theta} \leq e^{2 y\theta}.$$

Using this observation to bound (1) gives the following:


\begin{lemma} For $n(L)$ as above,
$$n(L)\leq e^{3 y\theta}.$$
\end{lemma}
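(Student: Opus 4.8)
The plan is to chain together the two bounds that the excerpt has just assembled. The theorem of van~Emde Boas--Kruyswijk and Meshulam (Theorem~\ref{main}) gives the inequality
\begin{gather}
n(L)<\lambda(L)\left(1+\log\left(\frac{L}{\lambda(L)}\right)\right),\nonumber
\end{gather}
and immediately before the lemma we obtained the bound $\lambda(L)\leq e^{2y\theta}$. So the whole content of the lemma is to substitute the latter into the former and verify that the right-hand side is at most $e^{3y\theta}$ once $y$ is large. First I would bound the logarithmic factor crudely: since $\lambda(L)\geq 1$ we have $\log(L/\lambda(L))\leq \log L$, and $L=\prod_{q\in\mathcal Q}q$ with each $q\leq y^\theta$, so $\log L\leq |\mathcal Q|\,\theta\log y$. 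Using the stated cardinality bound $|\mathcal Q|\leq y^\theta$ (indeed $|\mathcal Q|\leq \pi(y^\theta)\ll y^\theta/\log y$), this gives $\log L\ll y^\theta$, so the factor $1+\log(L/\lambda(L))$ is polynomially bounded in $y$, say $\leq C' y^\theta$ for large $y$.

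Next I would combine the two pieces: $n(L)\leq \lambda(L)\bigl(1+\log(L/\lambda(L))\bigr)\leq e^{2y\theta}\cdot C'y^\theta$. The point is that the dominant growth comes entirely from the exponential $e^{2y\theta}$, while the correction factor $C'y^\theta$ is only polynomial. Therefore for all sufficiently large $y$ we have $C'y^\theta\leq e^{y\theta}$, and the product is bounded by $e^{2y\theta}\cdot e^{y\theta}=e^{3y\theta}$, which is exactly the claimed inequality. In other words, the exponent $2$ is padded up to $3$ precisely to absorb the polynomial factor coming from the logarithm, with room to spare.

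The argument is essentially a one-line estimate, so there is no serious obstacle; the only point that requires a moment of care is making sure the polynomial correction truly is dominated by the exponential for large $y$, i.e.\ that $\bigl(1+\log(L/\lambda(L))\bigr)\leq e^{y\theta}$ eventually. This is where the ``for $y$ large'' (equivalently ``for $L$ large'') hypothesis is doing its work, and it is consistent with the earlier stipulation that the cardinality bound on $\mathcal Q$ holds only for $y>Y$. I would simply note that all the inequalities hold for $y$ exceeding some threshold and that, since we are ultimately constructing infinitely many $L$ by letting $y\to\infty$, restricting to large $y$ costs us nothing.
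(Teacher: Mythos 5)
Your proposal is correct and matches the paper's (largely implicit) argument: the paper likewise obtains the lemma by substituting $\lambda(L)\leq e^{2y\theta}$ into the bound $n(L)<\lambda(L)\left(1+\log\left(L/\lambda(L)\right)\right)$ from Theorem \ref{main} and absorbing the polynomially bounded logarithmic factor into the enlarged exponent $3y\theta$. Your explicit verification that $\log L\ll y^\theta$ (via $|\mathcal Q|\ll y^\theta/\log y$ and $q\leq y^\theta$) is exactly the bookkeeping the paper leaves to the reader.
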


\section{Counting Elements of $\mathcal P_k$}

In order to determine the size of $\mathcal P_k$, we must now invoke Conjecture 2.  To this end, we have the following:

\begin{theorem} Assume Conjecture \ref{strong}.  Then there exists an integer $k$ such that $$|\mathcal P_k|>\frac{2^\frac{\omega(L)}{A+1}}{(\log L)^A}.$$\end{theorem}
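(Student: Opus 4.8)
The plan is to apply Conjecture \ref{strong} separately to each admissible divisor $d$ and then pigeonhole over the resulting values of $k$. Fix $y$ large, and for a subset $S\subseteq\{1,\dots,\left[\frac{\omega(L)}{A+1}\right]\}$ write $d=d_S=\prod_{j\in S}Q_j$; these are exactly the squarefree divisors of $L$ counted in the definition of $\mathcal P_k$, and there are precisely $2^{\left[\frac{\omega(L)}{A+1}\right]}$ of them. Assuming $a\neq 0$ (the case $a=0$ is vacuous, as every $p\mid n$ divides $n$), each prime factor $q$ of $d$ satisfies $q\geq \frac{y^\theta}{\log y}>|a|$ once $y$ is large, so $q\nmid a$ and hence $(a,d)=1$. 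Conjecture \ref{strong} then supplies a prime $p_d\equiv a\pmod d$ with $p_d\ll d(\log d)^A$. Since $d\mid p_d-a$, we may write $p_d=dk_d+a$ with $k_d$ a positive integer satisfying
$$k_d=\frac{p_d-a}{d}\ll (\log d)^A\leq (\log L)^A.$$
Thus $d\mapsto k_d$ maps the $2^{\left[\frac{\omega(L)}{A+1}\right]}$ admissible divisors into the set $\{1,2,\dots,\lfloor C(\log L)^A\rfloor\}$, where $C$ is the absolute constant implicit in Conjecture \ref{strong}.

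Before pigeonholing I record the distinctness that the grouping into blocks of size $A+1$ is designed to guarantee. For a fixed value of $k$, the map $d\mapsto dk+a$ is injective, so distinct admissible divisors $d$ with $k_d=k$ produce distinct primes $dk+a=p_d$. Consequently no prime is counted twice, and $|\mathcal P_k|$ equals the number of admissible $d$ with $dk+a$ prime, which is at least the number of admissible $d$ with $k_d=k$. Moreover every admissible $d$ contributes to at least one value of $k$, namely $k=k_d$, so the $2^{\left[\frac{\omega(L)}{A+1}\right]}$ divisors are genuinely distributed among the at most $C(\log L)^A$ available values of $k$.

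Applying the pigeonhole principle, some value $k$ receives at least $2^{\left[\frac{\omega(L)}{A+1}\right]}/\bigl(C(\log L)^A\bigr)$ of the divisors, and for that $k$ the previous paragraph gives
$$|\mathcal P_k|\geq \frac{2^{\left[\frac{\omega(L)}{A+1}\right]}}{C(\log L)^A}.$$
Using $\left[\frac{\omega(L)}{A+1}\right]\geq \frac{\omega(L)}{A+1}-1$ absorbs the floor at the cost of a factor of $2$, and the absolute constant $2C$ is immaterial: it may be folded into the implied constants, and the downstream construction only uses that $|\mathcal P_k|$ is exponentially large in $\omega(L)=|\mathcal Q|$, which $\to\infty$ as $y\to\infty$. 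I therefore obtain $|\mathcal P_k|>2^{\omega(L)/(A+1)}/(\log L)^A$ for $y$ sufficiently large, as claimed. The substance of the argument, and its only delicate point, is not the pigeonhole itself but the two bookkeeping checks surrounding it: verifying $(a,d)=1$ so that Conjecture \ref{strong} is applicable to the modulus $d$, and establishing the injectivity that prevents any prime from being double-counted. The constant and floor absorption, by contrast, is routine and does not affect the exponential main term.
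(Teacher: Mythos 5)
Your proof is correct, and its core mechanism is the same as the paper's: apply Conjecture 2 once for each admissible divisor $d=\prod_{j\in S}Q_j$ to produce a prime $dk_d+a$ with $k_d\ll(\log L)^A$, then pigeonhole the $2^{\left[\omega(L)/(A+1)\right]}$ divisors over the $O\!\left((\log L)^A\right)$ possible values of $k_d$. The genuine difference is how the two arguments handle double-counting. The paper devotes most of its proof to showing that no prime can arise as $d_1k_1+a=d_2k_2+a$ for two \emph{distinct} admissible divisors even when $k_1\neq k_2$; this is precisely where the block structure enters, since a $Q_i$ dividing $d_1$ but not $d_2$ would have to divide $k_2$, which is impossible because $Q_i>\log^{A+1-\epsilon}L$ while $k_2<\log^A L$. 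You instead observe that the pigeonhole only ever compares divisors assigned to the \emph{same} $k$, where distinctness of the primes is automatic: $d\mapsto dk+a$ is injective for fixed $k$. That observation is sufficient for the stated lower bound on $|\mathcal P_k|$, so your route is a legitimate simplification; it also exposes that the grouping of the primes of $L$ into blocks of size $A+1$ is not actually needed for this theorem (only for the paper's stronger cross-$k$ distinctness claim). A further point in your favor is the explicit verification that $(a,d)=1$ for large $y$, which is required to invoke the conjecture and which the paper silently omits. One caveat: your final absorption of the conjecture's implied constant is looser than it should be — strictly, your pigeonhole yields $|\mathcal P_k|\geq 2^{\omega(L)/(A+1)}/\left(2C(\log L)^A\right)$, which is \emph{weaker} than the stated inequality by the factor $2C$, and "folding into implied constants" cannot recover a bound that has no implied constant. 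The paper deals with this by declaring, just after the theorem statement, that $y$ and $A$ are taken large enough that the first prime in the progression is strictly less than $m\log^A m$ (no constant); invoking that same convention, or enlarging $A$ by one to swallow $C$, would close this small gap cleanly. Since the downstream use of the theorem indeed only needs exponential growth in $\omega(L)$, this is cosmetic rather than substantive.
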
\label{PKK}

We note that throughout the remainder of the paper, we will assume that $y$ and $A$ are large enough that the first prime in an arithmetic progression mod $L$ is strictly less than $L\log^A L$ (rather than $\leq$).

\begin{proof} Assume the conjecture.  Then for each $d$ that can be written as the product of $Q$'s, there exists a $k<\log^A L$ for which $dk+a$ is prime.

Now, we must prove that none of these primes are double-counted (i.e. none of the primes are being counted as $dk+a$ for two different values of $d$).  Let us assume that there exist $d_1,k_1,d_2,k_2$ such that $d_1k_1+1=d_2k_2+1$.  We know that there exists some $Q_i$ that divides $d_1$ but not $d_2$.  So $Q_i$ must divide $k_2$.  By the conjecture,
$$k_2<\log^{A} d\leq \log^{A} L.$$

But for any $Q_i$,
$$Q_i>\frac{y^{(A+1)\theta}}{\log^{A+1} y}>\log^{A+1-\epsilon} L,$$
contradicting that $Q$ can divide $k_2$.

Since the number of possible $d$'s is $2^{[\frac{\omega(L)}{A+1}]}$ and every $k$ is $<\log^A L$, there must therefore exist a $k$ where

$$|\mathcal P_k|\geq \frac{2^\frac{\omega(L)}{A+1}}{(\log L)^A}.$$

\end{proof}

\section{Invoking the Conjecture Again}

Now, we will use the Heath-Brown conjecture again to prove that there exists another prime which we can multiply by some of the primes in $\mathcal P_k$ to find an $a$-Carmichael number.

\begin{lemma} \label{BigP}
Assume Conjecture \ref{strong}.  If $y$ and $L$ are sufficiently large then for the $k$ chosen in Theorem \ref{PKK}, there exists a prime $P$ such that $P\equiv a$ (mod $Lk$) and $\frac{P-a}{Lk}\leq \log^{A} L$.
\end{lemma}

This follows from the statement of the conjecture.

Define $k'=\frac{P-a}{Lk}$.  Then we have the following:

\begin{theorem}\label{PK}
For $k$ and $k'$ as above, there exists a subset of $\mathcal P_k$ whose product is 1 mod $Lkk'$.
\end{theorem}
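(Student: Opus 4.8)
The plan is to apply Theorem~\ref{main} with the modulus taken to be $M:=Lkk'$ and with the distinguished set of elements of $(\mathbb Z/M\mathbb Z)^\times$ taken to be the primes comprising $\mathcal P_k$. Two things must be checked: that each $p\in\mathcal P_k$ is genuinely a unit modulo $M$, and that $|\mathcal P_k|\geq n(M)$. Granting both, Theorem~\ref{main} at once furnishes a subset $S\subseteq\mathcal P_k$ with $\prod_{p\in S}p\equiv 1\pmod{M}$, which is precisely the assertion (and, appended to $P$, yields the desired $a$-Carmichael number, since $M=P-a$).

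Coprimality is the routine part. Each $p\in\mathcal P_k$ has the shape $p=dk+a$ with $d$ a product of blocks $Q_i$, so $p\geq Q_1>y^{(A+1)\theta}/\log^{A+1}y$. For $y$ large this forces $p$ to exceed $k$, $k'$, and every prime $q\leq y^\theta$ dividing $L$; as $p$ is prime it can divide none of $L$, $k$, $k'$, whence $\gcd(p,Lkk')=1$.

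The real content is the estimate on $n(M)$, which by inequality~(1) reduces to bounding $\lambda(Lkk')$. The worry is that passing from $L$ to $Lkk'$ might blow up the Carmichael function; I would show it does not, the point being that $k,k'<\log^A L$ are minuscule beside $L$. Writing $\lambda(Lkk')$ as the least common multiple of the $\lambda(r^{b_r})$ over prime powers $r^{b_r}$ exactly dividing $Lkk'$, I would separate the primes $q\in\mathcal Q$ from the primes dividing $kk'$ alone. Since each $q\geq y^\theta/\log y$ while $kk'<\log^{2A}L$, no $q$ occurs in $kk'$ to a power exceeding $O(A)$, so the $q$-block contributes beyond $\lambda(L)$ only a factor dividing the $\mathcal Q$-part of $kk'$, hence at most $\log^{2A}L$; the remaining primes contribute a factor dividing $kk'\leq\log^{2A}L$ as well. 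Combining with the earlier bound $\lambda(L)\leq e^{2y\theta}$ gives $\lambda(Lkk')\leq e^{2y\theta}\log^{4A}L\leq e^{3y\theta}$ for large $y$, since $\log L\asymp y^\theta$ renders the power of $\log L$ negligible. Feeding this into inequality~(1) together with $\log(Lkk')\leq 2\log L$ yields $n(Lkk')\leq e^{4y\theta}$, in line with the Lemma that bounded $n(L)$.

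It remains to compare sizes. By Theorem~\ref{PKK} and $\omega(L)\gg y^\theta/\log y$ one has $|\mathcal P_k|>2^{\omega(L)/(A+1)}/(\log L)^A\geq e^{c\,y^\theta/\log y}$ for some $c>0$. Because $\theta>1$, the exponent $y^\theta/\log y$ outgrows the quantity $4y\theta$ controlling $n(Lkk')$, so $|\mathcal P_k|\geq n(Lkk')$ for all sufficiently large $y$; Theorem~\ref{main} then produces the required subset. The step I expect to be the main obstacle is exactly the bound on $\lambda(Lkk')$: one must verify carefully that the auxiliary factors $k$ and $k'$ neither raise the exponents of the large primes $q\mid L$ beyond $O(A)$ nor otherwise drive $\lambda$ past $e^{O(y)}$, since the entire argument rests on $n(Lkk')$ staying of size $e^{O(y)}$ against the far larger $|\mathcal P_k|$.
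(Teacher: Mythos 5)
Your proposal is correct and follows essentially the same route as the paper: bound $\lambda(Lkk')$ using the smallness of $k\leq\log^A L$ and $k'\leq\log^{A+1}L$, conclude $n(Lkk')\leq e^{O(y)}$, observe that $|\mathcal P_k|>2^{\omega(L)/(A+1)}/(\log L)^A$ dwarfs this since $\theta>1$, and invoke Theorem~\ref{main}. Your version is in fact slightly more careful than the paper's, since you verify that the elements of $\mathcal P_k$ are units modulo $Lkk'$ and you bound $\lambda(Lkk')$ via the lcm over prime powers rather than through the paper's claim $\lambda(kk'L)\leq\lambda(k)\lambda(k')\lambda(L)$, which strictly speaking requires coprimality of the factors.
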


\begin{proof} Recall that $$|\mathcal P_k|>\frac{2^\frac{\omega(L)}{A+1}}{(\log L)^A}.$$  Since $k\leq \log^{A} L$, $k'\leq \log^{A+1} L$ and $\lambda(L)\leq e^{2y\theta}$, it is clear that

$$\lambda(kk'L)\leq \lambda(k)\lambda(k')\lambda(L)\leq \left(\log^{2A+1} L\right) e^{2y\theta},$$
which means

$$n(kk'L)\leq e^{3y\theta}.$$

Hence, $$|\mathcal P_k|>n(kk'L),$$
and thus the proof follows from Theorem \ref{main}.
\end{proof}

Let $p_1,p_2,....,p_s$ be a set of primes in $\mathcal P_k$ whose product is 1 mod $Lkk'$.  We will denote

$$n'=p_1p_2....p_s.$$

From this, we finally prove the existence of an $a$-Carmichael number:

\begin{theorem} Let $P$ and $n'$ be as above, and let $n=Pn'$.  Then $n$ is an $a$-Carmichael number.\end{theorem}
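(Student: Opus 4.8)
The plan is to verify directly that every prime factor of $n$ satisfies the required divisibility condition. Since $n = P n' = P p_1 p_2 \cdots p_s$, every prime factor of $n$ is either $P$ or one of the primes $p_i \in \mathcal{P}_k$. The key observation, which makes both cases fall out of a single computation, is that the modulus $Lkk'$ was engineered so that $P - a$ equals $Lkk'$ \emph{exactly}, while each $p_i - a = d_i k$ divides $Lk$ and hence divides $Lkk'$. Thus it suffices to prove the single congruence $n \equiv a \pmod{Lkk'}$, after which both divisibility conditions follow by reading off divisors of $Lkk'$.

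To establish this congruence, I would combine the two facts built into the construction. First, by the definition $k' = \frac{P-a}{Lk}$ from Lemma \ref{BigP} we have $P - a = Lkk'$, so $P \equiv a \pmod{Lkk'}$. Second, by Theorem \ref{PK} the product $n' = p_1 \cdots p_s$ satisfies $n' \equiv 1 \pmod{Lkk'}$. Multiplying these two congruences gives
\begin{equation}
n = Pn' \equiv a \cdot 1 \equiv a \pmod{Lkk'},
\end{equation}
so that $Lkk' \mid n - a$.

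It then remains only to extract the two divisibility conditions from this single fact. For the prime $P$, we have $P - a = Lkk' \mid n - a$ directly. For each prime $p_i = d_i k + a$, since $d_i \mid L$ we have $p_i - a = d_i k \mid Lk$, and $Lk \mid Lkk' \mid n - a$; hence $p_i - a \mid n - a$. This verifies the Korselt-type condition for every prime factor of $n$, and therefore $n$ is an $a$-Carmichael number.

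I do not expect a genuine obstacle in this final step: the entire difficulty of the construction was front-loaded into choosing $k$, $k'$, and $P$ precisely so that the modulus $Lkk'$ would simultaneously equal $P - a$ and be divisible by every $p_i - a$. The one point requiring a moment's care is that $P - a$ must equal $Lkk'$ exactly, rather than merely dividing $n - a$ through some weaker relation; this is exactly why the second application of the conjecture in Lemma \ref{BigP} was arranged to produce a $P$ congruent to $a$ modulo $Lk$ with the specified quotient $k'$, whereas for the smaller primes $p_i$ only the weaker divisibility by $Lk$ is needed.
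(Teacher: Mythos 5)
Your proposal is correct and follows essentially the same argument as the paper: multiply the congruences $P\equiv a \pmod{Lkk'}$ and $n'\equiv 1 \pmod{Lkk'}$ to get $n\equiv a \pmod{Lkk'}$, then read off $p_i-a=d_ik \mid Lkk' \mid n-a$ for the primes in $\mathcal P_k$ and $P-a=Lkk' \mid n-a$ for $P$. Your write-up simply makes explicit a few points the paper leaves implicit, such as the exact equality $P-a=Lkk'$ coming from the definition of $k'$.
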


\begin{proof} We know that $n'\equiv 1$ (mod $Lkk'$).  Since $P\equiv a$ (mod $Lkk'$) by construction, we know that $Pn'\equiv a$ (mod $Lkk'$).  So for every prime $p|n'$, we have

$$p-a=dk|Lkk'|Pn'-a.$$
Moreover, for $P$, we know that $$P-a=Lkk'|Pn'-a.$$Thus, $n$ is an $a$-Carmichael number.\end{proof}

Since there exist infinitely many $L$ that can be constructed in this fashion, there are infinitely many $a$-Carmichael numbers.

\section{Counting the $a$-Carmichael numbers}
If we wish to count the number of $a$-Carmichael numbers up to $x$, we have the following:

\begin{theorem}
Assuming Conjecture \ref{strong}, there exists a constant $C$ such that the number of $a$-Carmichael numbers up to $X$ is
$$\gg X^{\frac{C}{(\log\log\log X)^2}},$$
\end{theorem}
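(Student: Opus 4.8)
The plan is to fix a large $X$ and manufacture many distinct $a$-Carmichael numbers below $X$ from a single, well-chosen modulus $L$, and then count them. For such an $L$ the earlier sections have already produced the prime $P\equiv a\pmod{Lkk'}$ of Lemma \ref{BigP} and the set $\mathcal P_k$ of primes of Theorem \ref{PKK}; writing $r=|\mathcal P_k|$, the ``Moreover'' clause of Theorem \ref{main} supplies, for any $n(L)<t<r$, at least $\binom{r}{t}\big/\binom{r}{n(L)}$ distinct subsets $S\subseteq\mathcal P_k$ whose product is $1\bmod Lkk'$. Each such $S$ gives an $a$-Carmichael number $n_S=P\prod_{p\in S}p$ exactly as in Theorem \ref{PK}. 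Since the members of $\mathcal P_k$ are distinct primes, each at most $Lk+a$, while $P$ is strictly larger, unique factorisation recovers $S$ from $n_S$; hence distinct admissible subsets yield distinct integers, and the number of $a$-Carmichael numbers obtained is at least the displayed binomial ratio.

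First I would control the size of these integers so as to force $n_S\le X$. Every prime in $\mathcal P_k$ is at most $Lk+a\le L\log^AL+a$ and $P\le Lk\log^AL+a$, so a subset of size at most $t$ satisfies
$$n_S\le\left(Lk\log^AL+a\right)\left(L\log^AL+a\right)^{t}.$$
Taking logarithms, $\log n_S\ll (t+1)\log L$, so $n_S\le X$ is guaranteed once $t\ll \log X/\log L$. This is the ceiling on the subset size; it must be reconciled with the hypotheses $r>t>n(L)$ of Theorem \ref{main}.

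The heart of the argument is then the choice of the parameter $y$ (equivalently of $\log L$). From $|\mathcal Q|\gg y^\theta/\log y$ and $\log L\asymp y^\theta$ one has $\omega(L)\asymp \log L/\log\log L$, so Theorem \ref{PKK} gives
$$\log r\gg \frac{\omega(L)}{A+1}\gg \frac{\log L}{\log\log L},$$
while the lemma bounding $n(L)$ yields $\log n(L)\ll y\asymp (\log L)^{1/\theta}$. I would now take $\log L$ of order $\log\log X\,(\log\log\log X)^2$, so that $\log\log L\asymp\log\log\log X$. With $t\asymp \log X/\log L$ one checks $\log r\gg\log\log X\,\log\log\log X\gg\log t$, so $r>t$, and $\log n(L)\ll(\log\log X)^{1/\theta}\ll\log\log X\asymp\log t$, so $t>n(L)$; all three constraints, together with $y>Y$, then hold for $X$ large. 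Consequently
$$\frac{1}{\log X}\log\left(\binom{r}{t}\Big/\binom{r}{n(L)}\right)\gg\frac{t\log(r/t)}{\log X}\asymp\frac{\log(r/t)}{\log L}\gg\frac{1}{\log\log L}\asymp\frac{1}{\log\log\log X},$$
which comfortably exceeds $1/(\log\log\log X)^2$. This produces $\gg X^{C/(\log\log\log X)^2}$ distinct $a$-Carmichael numbers below $X$ for a suitable constant $C$.

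The main obstacle is precisely this balancing act: the lower bound $\log r\gg \log L/\log\log L$ must beat the size ceiling $t\ll \log X/\log L$ and still clear the floor $t>n(L)=\exp(\Theta(y))$, and one has to verify that the admissible window for $\log L$ is non-empty and that the chosen value keeps $y$ above the threshold $Y$ of the $|\mathcal Q|$-estimate. A secondary but essential point is the distinctness of the produced numbers, which relies on $P$ exceeding every element of $\mathcal P_k$ so that $P$ itself is never one of the $p_i$; this should be recorded explicitly before counting.
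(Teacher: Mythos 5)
Your proposal is correct and is essentially the paper's own argument: both construct the $a$-Carmichael numbers as $P\prod_{p\in S}p$ over the subsets $S\subseteq\mathcal P_k$ supplied by the counting clause of Theorem \ref{main}, both rely on the same three inputs ($|\mathcal P_k|>2^{\omega(L)/(A+1)}/(\log L)^A$, $n(Lkk')\le e^{3y\theta}$, and the bound $p\le L\log^{A+1}L$ on the primes involved), and both turn the binomial ratio $\binom{r}{t}/\binom{r}{n}$ into a power of $X$. The only difference is bookkeeping direction: the paper fixes $L$, takes $t=(3/2)^{\omega(L)/(A+1)}$, and defines $X$ from the construction, whereas you fix $X$ and solve for $\log L\asymp\log\log X\,(\log\log\log X)^2$ and $t\asymp\log X/\log L$ --- a dual parameterization that gives the bound uniformly for all large $X$ (and in fact the slightly stronger exponent $c/\log\log\log X$), rather than along the sequence of $X$ values the paper's proof literally produces.
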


\begin{proof} In order to apply Theorem \ref{main}, we must choose an $r$ and $t$ and find a bound for $n=n(Lkk')$ such that $n<t<r<|\mathcal P_k|$.  As such, let
\begin{gather*}
r=\left(\frac{7}{4}\right)^{\frac{\omega(L)}{A+1}},\\
t=\left(\frac 32\right)^{\frac{\omega(L)}{A+1}}.
\end{gather*}
Since $n\leq e^{3y\theta}$ and $\omega(L)\geq \gamma y^\theta/\log y$, it is clear that both $n<\left(\frac 54\right)^{\frac{\omega(L)}{A+1}}$ and $n\leq \frac{1}{20}t$ for sufficiently large $y$.  According to Theorem \ref{main}, the number of subsets of $\mathcal P_k$ consisting of at most $t$ and at least $t-n(L)$ terms whose product is 1 mod $Lkk'$ is at least

$$\left(\begin{array}{c} r \\ t \end{array}\right)/\left(\begin{array}{c} r \\ n \end{array}\right)$$
Recalling the standard bound that

\[\left(\frac{u}{v}\right)^v\leq \left(\begin{array}{c} u\\v\end{array}\right)\leq \left(\frac{ue}{v}\right)^v,\]
we have that

\begin{align*}
\left(\begin{array}{c} r \\ t \end{array}\right)/ & \left(\begin{array}{c} r \\ n \end{array}\right)\\
\geq & \left(\left(\frac{7}{6}\right)^{\frac{\omega(L)}{A+1}} \right)^t /\left(\left(\frac{7}{5}\right)^{\frac{\omega(L)}{A+1}} e \right)^{\frac{1}{20} t}\\
\geq & \left(\left(\frac{7}{6}\right)/\left(\frac 75\right)^{\frac{1}{20}}\right)^{\frac{t(\omega(L))}{A+1}} \left(\frac 1e\right)^{\frac{1}{20} t}\\
\geq & (1.1)^{\frac{t(\omega(L))}{A+1}}\\
\geq & e^{\frac{t\log 1.1}{A+1} \left(\frac{\gamma y^\theta}{\log y}\right)}
\end{align*}
where the last line comes from the fact that $\omega(L)\geq \frac{\gamma y^\theta}{\log y}$.  Define
\[X=Px^t,\]
where $P$ is the prime given in Lemma \ref{BigP}.  We recall that $L\leq e^{\kappa y^\theta}$  for some $\kappa<1.02$ (see [RS, Theorems 7 and 8]).  So
\begin{gather} \label{g1}
X=Px^t\leq (L \log^{2A+1}L)(L\log^{A+1} L)^t< e^{3y^\theta t}.
\end{gather}
Note that for sufficiently large $y$,
\begin{gather}\label{g2}
(\log\log\log X)^2\geq \log y.
\end{gather}
From (\ref{g1}) and (\ref{g2}), it follows that
$$e^{ty^\theta\left(\frac{\log 1.1}{A+1}\right) \left(\frac{\gamma }{\log y}\right)}\gg X^{\left(\frac{\log 1.1}{3(A+1)}\right) \left(\frac{\gamma}{\left(\log\log\log X\right)^2}\right)}.$$
This is then a lower bound for the number of $a$-Carmichael numbers up to $X$, thereby proving the main theorem.

\end{proof}

\end{document}